\documentclass[english,12pt]{amsart}
\usepackage{amssymb}
\usepackage{amsfonts}
\usepackage{amscd}
\usepackage[all]{xy}
\usepackage{color}
\usepackage{enumitem}
\usepackage[utf8]{inputenc}
\usepackage{tikz}
\usepackage{todonotes}

\usepackage{pgf}
\usepackage[T1]{fontenc}
\usepackage{xr-hyper}
\usepackage[colorlinks=true, breaklinks=true, urlcolor= black, allcolors= black,bookmarksopen=true,linktocpage=true,plainpages=false,pdfpagelabels]{hyperref}
\usepackage{cleveref}

\textwidth=15cm \oddsidemargin=5mm \evensidemargin=5mm
\textheight=21.5cm
\parindent=0.5cm

\CompileMatrices

\swapnumbers

\def\deg{{\rm deg}}








\def\11{{\mathbf 1}}
\def\AA{{\mathbb A}}

\def\FF{{\mathbb F}}

\def\PP{{\mathbb P}}

\def\RR{{\mathbb R}}

\def\ZZ{{\mathbb Z}}

\mathchardef\alphag="7C0B \mathchardef\betag="7C0C
\mathchardef\gammag="7C0D \mathchardef\deltag="7C0E
\mathchardef\varepsilong="7C22 \mathchardef\varphig="7C27
\mathchardef\psig="7C20 \mathchardef\zetag="7C10
\mathchardef\epsilong="7C0F \mathchardef\rhog="7C1A
\mathchardef\taug="7C1C \mathchardef\upsilong="7C1D
\mathchardef\iotag="7C13 \mathchardef\thetag="7C12
\mathchardef\pig="7C19 \mathchardef\sigmag="7C1B
\mathchardef\etag="7C11 \mathchardef\omegag="7C21
\mathchardef\kappag="7C14 \mathchardef\lambdag="7C15
\mathchardef\mug="7C16 \mathchardef\xig="7C18
\mathchardef\chig="7C1F \mathchardef\nug="7C17
\mathchardef\varthetag="7C23 \mathchardef\varpig="7C24
\mathchardef\varrhog="7C25 \mathchardef\varsigmag="7C26
\mathchardef\Omegag="7C0A \mathchardef\Thetag="7C02
\mathchardef\Sigmag="7C06 \mathchardef\Deltag="7C01
\mathchardef\Phig="7C08 \mathchardef\Gammag="7C00
\mathchardef\Psig="7C09 \mathchardef\Lambdag="7C03
\mathchardef\Xig="7C04 \mathchardef\Pig="7C05
\mathchardef\Upsilong="7C07

\newtheorem{thm}[subsection]{Theorem}
\newtheorem{lem}[subsection]{Lemma}
\newtheorem{cor}[subsection]{Corollary}

{Addendum}

\theoremstyle{definition}
\newtheorem{defn}[subsection]{Definition}

\newtheorem{def-prop}[subsection]{Proposition-Definition}
\newtheorem{def-theorem}[subsection]{Theorem-Definition}
\newtheorem{def-lem}[subsection]{Lemma-Definition}

\theoremstyle{remark}

\theoremstyle{plain}

\numberwithin{equation}{subsection}

\def\boxit#1#2{\setbox1=\hbox{\kern#1{#2}\kern#1}%
\dimen1=\ht1 \advance\dimen1 by #1 \dimen2=\dp1 \advance\dimen2 by
#1
\setbox1=\hbox{\vrule height\dimen1 depth\dimen2\box1\vrule}%
\setbox1=\vbox{\hrule\box1\hrule}%
\advance\dimen1 by .4pt \ht1=\dimen1 \advance\dimen2 by .4pt
\dp1=\dimen2 \box1\relax}

\renewcommand{\theequation}{\thesubsection.\arabic{equation}}

\mathchardef\alphag="7C0B \mathchardef\betag="7C0C
\mathchardef\gammag="7C0D \mathchardef\deltag="7C0E
\mathchardef\varepsilong="7C22 \mathchardef\varphig="7C27
\mathchardef\psig="7C20 \mathchardef\zetag="7C10
\mathchardef\epsilong="7C0F \mathchardef\rhog="7C1A
\mathchardef\taug="7C1C \mathchardef\upsilong="7C1D
\mathchardef\iotag="7C13 \mathchardef\thetag="7C12
\mathchardef\pig="7C19 \mathchardef\sigmag="7C1B
\mathchardef\etag="7C11 \mathchardef\omegag="7C21
\mathchardef\kappag="7C14 \mathchardef\lambdag="7C15
\mathchardef\mug="7C16 \mathchardef\xig="7C18
\mathchardef\chig="7C1F \mathchardef\nug="7C17
\mathchardef\varthetag="7C23 \mathchardef\varpig="7C24
\mathchardef\varrhog="7C25 \mathchardef\varsigmag="7C26
\mathchardef\Omegag="7C0A \mathchardef\Thetag="7C02
\mathchardef\Sigmag="7C06 \mathchardef\Deltag="7C01
\mathchardef\Phig="7C08 \mathchardef\Gammag="7C00
\mathchardef\Psig="7C09 \mathchardef\Lambdag="7C03
\mathchardef\Xig="7C04 \mathchardef\Pig="7C05
\mathchardef\Upsilong="7C07

\newcommand{\disc}{\operatorname{disc}}

\newcommand{\conv}{\operatorname{conv}}



\definecolor{immi}{rgb}{0,.6,.1}

\newbox\removebox
\newcommand\remove[1]{%
\setbox\removebox=\ifmmode\hbox{$#1$}\else\hbox{#1}\fi%
\leavevmode
\rlap{\textcolor{blue}{\vrule height0.8ex depth-0.6ex width\wd\removebox}}%
\box\removebox
}
\long\def\bigremove#1{%
\par\setbox\removebox=\vbox{#1}%
\vbox{%
\vbox to0pt{\hbox{\tikz\draw[color=blue,thick] (0,0) -- (\wd\removebox,-\ht\removebox)  (\wd\removebox,0) -- (0,-\ht\removebox);}}
\box\removebox
}
}



\definecolor{orange}{rgb}{1,0.5,0}

\newcommand{\private}[1]{\leavevmode{\scriptsize\color{blue}\marginpar{{\scriptsize Private comment}}#1\par}}
\renewcommand{\private}[1]{}


\title[Curves of fixed gonality with many rational points]{Curves of fixed gonality with many rational points}

\author[Vermeulen]{Floris Vermeulen}
\address{KU Leuven, Department of Mathematics, B-3001 Leu\-ven, Bel\-gium}
\email{floris.vermeulen@kuleuven.be}
\urladdr{https://sites.google.com/view/floris-vermeulen/homepage}

\begin{document}

\begin{abstract}
Given an integer $\gamma\geq 2$ and an odd prime power $q$ we show that for every large genus $g$ there exists a non-singular curve $C$ defined over $\FF_q$ of genus $g$ and gonality $\gamma$ and with exactly $\gamma(q+1)$ $\FF_q$-rational points. This is the maximal number of rational points possible. This answers a recent conjecture by Faber--Grantham. Our methods are based on curves on toric surfaces and Poonen's work on squarefree values of polynomials.
\end{abstract}

\maketitle

\section{Introduction}

In this article we study the maximal number of rational points on curves over finite fields. Curves will always be geometrically integral, but may be singular. Let $q$ be a prime power and fix a positive integer $g$. Define $N_q(g)$ to be the maximal number of $\FF_q$-rational points of a non-singular genus $g$ curve defined over $\FF_q$. The famous Weil bound yields that 
\[
N_q(g) \leq 2\sqrt{q}g + q + 1.
\]
By work of Ihara \cite{Ihara} and later Vladut and Drinfeld \cite{vladut_number_1983}, when $g$ is large compared to $q$, this bound can be improved to $N_q(g) \leq (\sqrt{q}-1+o(1))g$ as $g\to \infty$. To find lower bounds on the quantity $N_q(g)$ one has to construct non-singular curves with many rational points. Many approaches have been developed to deal with this problem, and we refer to the introduction of \cite{kresch_curves_2001} for an overview. By work of Elkies, Howe, Kresch, Poonen, Wetherell and Zieve \cite{elkies_curves_2004}, it is currently known that for every $q$ there exists some constant $c$ such that for every genus $g$
\[
cg < N_q(g).
\]
Now suppose that $C$ is a curve over $\FF_q$ equipped with a degree $\gamma$ map $C\to \PP^1$, also defined over $\FF_q$. Then since every $\FF_q$-rational point of $C$ must map to an $\FF_q$-rational point of $\PP^1$ we have
\begin{equation}\label{eq:gonality.bound}
\#C(\FF_q) \leq \gamma (q+1).
\end{equation}
In particular, this is true when $\gamma$ is equal to the gonality of $C$ over $\FF_q$. Recall that the \emph{gonality} of a curve $C$ is the minimal degree of a morphism defined over $\FF_q$ to $\PP^1$. For a positive integer $\gamma$, denote by $N_q(g, \gamma)$ the maximal number of $\FF_q$-rational points on a non-singular genus $g$ curve of gonality $\gamma$, defined over $\FF_q$. By convention, if no such curve exists we put $N_q(g, \gamma)=-\infty$. In \cite{van_der_geer_curves_2001}, van der Geer asks about the behaviour of this function $N_{q}(g, \gamma)$. Recently, this quantity has been studied for small $q$ and $g$ by Faber and Grantham in \cite{faber_binary_2020} and \cite{faber_ternary_2020}. They conjecture that for fixed $q, \gamma$ and $g$ sufficiently large, we have $N_q(g, \gamma) = \gamma(q+1)$. We prove this conjecture in odd characteristic.

\begin{thm}\label{thm:main.theorem}
Let $q$ be an odd prime power and fix a positive integer $\gamma\geq 2$. Then for all sufficiently large $g$, there exists a non-singular curve $C$ defined over $\FF_q$, of genus $g$ and gonality $\gamma$, and with $\gamma(q+1)$ $\FF_q$-rational points. In other words
\[
\lim_{g\to \infty} N_q(g, \gamma) = \gamma(q+1).
\]
\end{thm}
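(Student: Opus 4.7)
The plan is to realize $C$ as a smooth curve in a Hirzebruch surface $\FF_n$ over $\FF_q$, using the canonical projection $\pi \colon \FF_n \to \PP^1$ to witness gonality $\gamma$. A smooth curve in the linear system $|\gamma s_0 + bf|$, where $s_0$ is the section with self-intersection $-n$ and $f$ a fiber, has geometric genus $(\gamma-1)(b-1) - n\binom{\gamma}{2}$ and is a degree-$\gamma$ cover of $\PP^1$ via $\pi$. Varying $(n, b)$ hits many (but not all) large genera; the remaining residues are filled in by admitting a bounded, prescribed number of nodes in the ambient curve and passing to the normalization, which drops the geometric genus by the number of nodes but preserves the degree-$\gamma$ gonality pencil. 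In this way every sufficiently large target $g$ can be reached.

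To force the maximum rational-point count, in each of the $q+1$ rational fibers $F_i \cong \PP^1$ of $\pi$ I fix a set $T_i$ of $\gamma$ distinct $\FF_q$-rational points (feasible under the mild hypothesis $\gamma \leq q+1$; the remaining cases are reduced to this by blowing up $\FF_n$ or by switching to another toric surface whose relevant fibers accommodate more rational points), and consider curves $C \in |\gamma s_0 + bf|$ passing through $T := \bigsqcup_i T_i$. Passing through a point is a linear codimension-$1$ condition on the defining section of $\cO(\gamma s_0 + bf)$, so for $b$ large the constrained subsystem is highly non-degenerate. Existence of a \emph{smooth} member meeting each $F_i$ transversally exactly in $T_i$, and acquiring the prescribed nodes away from $T$, follows from a Poonen-type Bertini theorem with prescribed local conditions, in the spirit of his work on squarefree values of multivariable polynomials: locally near each imposed point one prescribes the tangent behavior of the section, globally one requires the resulting curve to be smooth away from the imposed loci, and a positive-density argument produces the desired $C$. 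For any such $C$ one has $T \subseteq C(\FF_q)$, hence $\#C(\FF_q) \geq \gamma(q+1)$, and the bound \eqref{eq:gonality.bound} forces equality.

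The principal obstacle is the gonality lower bound: one must verify that the constructed $C$ admits no morphism to $\PP^1$ of degree $<\gamma$, since $\pi$ supplies the upper bound for free. For $\gamma \geq 3$, a generic smooth curve on $\FF_n$ of class $\gamma s_0 + bf$ carries $\pi$ as its unique $g^1_\gamma$ and no $g^1_{\gamma-1}$; one must then check that the finitely many linear conditions encoding $T$ and the prescribed nodes do not destroy this genericity. This is done by a second density argument within Poonen's framework, or by an explicit dimension count against the (low-dimensional) locus of curves in $|\gamma s_0 + bf|$ that carry a lower-degree pencil. The case $\gamma = 2$ is hyperelliptic and is treated directly: one constructs $y^2 = f(x)$ by Chinese-remainder interpolation so that $f(\alpha)$ is a nonzero square for every $\alpha \in \PP^1(\FF_q)$, with $\deg f$ and squarefreeness of $f$ tuned to realize the target genus $g$.
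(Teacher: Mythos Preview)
Your outline shares the paper's philosophy (curves on toric surfaces, Poonen's density methods, a degree-$\gamma$ pencil from projection), but it has one genuine gap and one misplaced worry.

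\textbf{The gap: the case $\gamma > q+1$.} You need $\gamma$ distinct $\FF_q$-rational points in each rational fiber $F_i \cong \PP^1$, but $\PP^1(\FF_q)$ has only $q+1$ points. Your proposed fixes do not help: blowing up $\FF_n$ at rational points does not increase the number of $\FF_q$-points in a fiber of $\pi$ (the total transform of a fiber is still a genus-zero configuration with $q+1$ rational points on each component), and any other toric surface with a rational fibration over $\PP^1$ still has $\PP^1$-fibers. More fundamentally, any smooth projective toric surface over $\FF_q$ has only $O(q^2)$ rational points, so for $\gamma$ large relative to $q$ a \emph{smooth} curve in it cannot carry $\gamma(q+1)$ rational points. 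The paper confronts this head-on: it deliberately lets $C \subset S(\Delta)$ be \emph{singular} above each $a \in \AA^1(\FF_q)$, choosing the Newton polygon so that the singularity above $a$ has exactly $\gamma$ rational branches (Lemma~\ref{lem:ratpoints.above.A1}). The $\gamma(q+1)$ rational points live on the normalization $\tilde{C}$, not on $C$. This is the crucial idea your sketch is missing; ``prescribed nodes away from $T$'' does not substitute for it.

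\textbf{The misplaced worry: the gonality lower bound.} You call this ``the principal obstacle'' and propose a genericity/dimension-count argument to rule out a $g^1_{\gamma-1}$. In fact it is free: once $\tilde{C}$ has $\gamma(q+1)$ rational points, any $\FF_q$-morphism $\tilde{C}\to\PP^1$ of degree $d$ forces $\gamma(q+1) \le d(q+1)$ by~\eqref{eq:gonality.bound}, hence $d \ge \gamma$. Combined with the degree-$\gamma$ map $\pi$, the gonality is exactly $\gamma$. The paper uses precisely this one-line argument; no dimension counts are needed.

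Apart from these two points, your Hirzebruch-surface framework with Poonen--Bertini and nodes for genus adjustment is a reasonable parallel to the paper's toric-surface construction with Poonen's squarefree-values theorem applied to the discriminant; but as it stands the argument only covers $\gamma \le q+1$.
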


For the proof, we will use the theory of curves on toric surfaces. The idea is to construct the desired curve $C$ inside a certain toric surface $S$. Such a method was introduced by Kresch, Wetherell and Zieve in \cite{kresch_curves_2001} to construct non-singular curves over $\FF_q$ of every sufficiently large genus $g$ with at least $c_qg^{1/3}$ $\FF_q$-rational points, for some constant $c_q$ depending on $q$. In our setting, we also want to control the gonality of $C$. It would be desirable if $C$ would be smooth in the surface $S$. However, $S$ has only roughly $(q+1)^2$ $\FF_q$-rational points, so we cannot hope for this if $\gamma$ is large compared to $q$. Instead, we construct $C$ as a singular curve inside $S$. By carefully controlling the singularities of $C$, we are able to control both the rational points on $C$ and the (geometric) genus of $C$. To do this we rely on work by Poonen \cite{poonen_squarefree_2003} to make sure that a certain discriminant polynomial is squarefree. This is also where the condition that the characteristic is not 2 is needed.

In upcoming work together with Faber \cite{faber_vermeulen}, we use another approach using class field theory to try to prove Theorem \ref{thm:main.theorem}. In particular, we are able to prove that for fixed $q, \gamma$, we have that
\[
\limsup_{g\to \infty} N_q(g, \gamma) = \gamma(q+1)
\]
where one can assume that the resulting covers $C\to \PP^1$ are all abelian. In particular this holds when $q$ is even. Nevertheless, we also argue that abelian covers do not suffice to prove Theorem \ref{thm:main.theorem} in general.

\subsection*{Acknowledgements.} The author would like to thank Wouter Castryck and Xander Faber for helpful discussions and for comments on an earlier version of this article. The author thanks Art Waeterschoot for discussions around singular curves. The author was partially supported by KU Leuven IF C14/17/083, and partially by F.W.O. Flanders (Belgium) with grant number 11F1921N.

\section{Curves on toric surfaces}

We will construct the desired curve having many rational points as a singular curve in a toric surface. This section contains the relevant background material. We refer the reader to \cite{castryck_linear_2016} and \cite[Sec.\,3]{kresch_curves_2001} for more information.

Let $k$ be a perfect field and let $f\in k[t^{\pm 1},y^{\pm 1}]$ be a Laurent polynomial. Write $f=\sum_{i,j} c_{i,j} t^iy^j$. Then the \emph{Newton polygon} of $f$ is defined to be
\[
\Delta(f) = \conv\{(i,j)\mid c_{i,j} \neq 0\}\subseteq \RR^2,
\]
where $\conv$ denotes the convex hull. If $\Delta\subseteq \RR^2$ is any lattice polygon we denote by $\Delta^{(1)}$ the convex hull of its interior lattice points. Assume now that $f$ is absolutely irreducible and let $\tilde{C}$ be the non-singular projective model of the curve defined by $f=0$ in the torus $(k^\times)^2$. There is a strong relation between the combinatorics of $\Delta(f)$ and the geometry of $\tilde{C}$. The first result in this direction, proven by Baker, is that the genus of $\tilde{C}$ is bounded above by the number of interior lattice points of $\Delta(f)$, i.e.\
\[
g(\tilde{C}) \leq \# (\Delta(f)^{(1)} \cap \ZZ).
\]
We will call this \emph{Baker's bound}, see \cite{beelen}. The quantity $\# (\Delta(f)^{(1)} \cap \ZZ)$ has an interesting geometric interpretation. To explain, we introduce some toric geometry. Let $\Delta$ be a (2-dimensional) lattice polygon and consider the map
\[
\phi: (k^\times)^2 \to \PP^{\#(\Delta\cap \ZZ^2)-1}: (t,y)\mapsto (t^iy^j)_{(i,j)\in \Delta\cap \ZZ^2}.
\]
Define $S(\Delta)$ to be the closure of the image of $\phi$, this is a toric surface. Its fan is obtained by taking all primitive inward facing normals of $\Delta$. Denote the coordinates on $\PP^{\#(\Delta\cap \ZZ^2)-1}$ by $X_{i,j}$ for $(i,j)\in \Delta\cap \ZZ^2$. If $f = \sum_{i,j} c_{i,j}t^iy^j$ is an absolutely irreducible Laurent polynomial supported on $\Delta$ and each edge of $\Delta$ contains a lattice point from $\Delta(f)$ then the hyperplane section
\[
\sum_{(i,j)\in \Delta} c_{i,j} X_{i,j} = 0
\]
cuts out a curve $C$ in $S(\Delta)$. This curve is automatically birationally equivalent to $\tilde{C}$ by our assumptions. Then the quantity $\# (\Delta(f)^{(1)}\cap \ZZ^2)$ is the arithmetic genus $g_a(C)$ of the curve $C$, see \cite{khovanskii} and \cite[Lem.\,3.4]{kresch_curves_2001}. In particular, there is equality in Baker's bound if and only if $C$ is non-singular in $S(\Delta)$. 

\begin{defn}
Let $f(t,y)$ be an absolutely irreducible Laurent polynomial and let $\Delta\subseteq \RR^2$ be a lattice polygon. We say that $f$ is a \emph{$\Delta$-polynomial} if 
\begin{enumerate}
\item $\Delta(f)\subseteq \Delta$,
\item every edge of $\Delta$ contains a lattice point from $\Delta(f)$.
\end{enumerate} 
If moreover the genus of the curve defined by $f=0$ is equal to the number of interior lattice points of $\Delta$, then we call $f$ \emph{$\Delta$-toric}.
\end{defn}

By the remarks above, a $\Delta$-polynomial $f$ is $\Delta$-toric if and only if $C$ is non-singular in $S(\Delta)$.

Suppose that $f$ is a $\Delta$-polynomial for some lattice polygon $\Delta$ which is contained in the strip $\RR\times [0,\gamma]$ for some positive integer $\gamma$. Then there is naturally a morphism $C\to \PP^1$ of degree at most $\gamma$, obtained from the map
\[
V(f)\subseteq (k^\times)^2 \to k^\times: (y,t)\mapsto t.
\]
If $\Delta$ is not contained in any smaller horizontal strip of the form $\RR\times [a,b]$ then this map has degree equal to $\gamma$. 

\begin{lem}\label{lem:ratpoints.above.A1}
Let $\gamma$ be a positive integer and let $0 = k_0\leq k_1 < k_2 <  ...< k_\gamma$ be integers. Define $\ell_j = \sum_{i=0}^j k_i$ and let $f$ be an absolutely irreducible Laurent polynomial over a field $k$ with 
\[
\Delta(f) = \conv\{(0,0), (\ell_1, 1), (\ell_2, 2), \ldots, (\ell_\gamma, \gamma), (m_0, 0), (m_1, 1), \ldots, (m_\gamma, \gamma)  \},
\]
for certain integers $m_j > \ell_j$. Let $C\subseteq S(\Delta(f))$ be the curve defined by $f=0$ in $S(\Delta(f))$ and let $\pi: C\to \PP^1$ be the degree $\gamma$ map as above. Then $\pi^{-1}(0)$ consists of $\gamma$ distinct non-singular $k$-rational points. 
\end{lem}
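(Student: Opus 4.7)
The plan is to compute $\pi^{-1}(0) \cap C$ edge by edge on the toric surface $S(\Delta)$, where $\Delta := \Delta(f)$. First, since $t = \chi^{(1,0)}$ and the primitive inward normal to the left-side edge $E_j$ from $(\ell_{j-1}, j-1)$ to $(\ell_j, j)$ is $(1, -k_j)$ (which is primitive because $\gcd(k_j, 1) = 1$), one has $\ord_{D_{E_j}}(t) = 1$, while the top and bottom edges of $\Delta$ contribute $0$ and the right-side edges contribute poles of $t$. Hence
\[
\pi^{-1}(0) \;=\; \sum_{j=1}^\gamma D_{E_j}
\]
as a reduced effective divisor on $S(\Delta)$, and it suffices to analyze each intersection $C \cap D_{E_j}$ separately.

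For each $j$, I would work in the smooth affine chart $U_{\sigma_j} \cong \AA^1 \times \GM$ corresponding to the ray $\sigma_j = \RR_{\geq 0}\,(1, -k_j)$, with coordinates $r := t$ and $s := t^{k_j} y$; in these coordinates $D_{E_j} = \{r = 0\}$, and $U_{\sigma_j}$ covers $D_{E_j}$ away from its two toric endpoints. After dividing $f$ by the vertex monomial $t^{\ell_{j-1}} y^{j-1}$ one obtains a regular function $f_v \in k[r, s^{\pm 1}]$ whose restriction to $r = 0$ picks up only the monomials of $f$ supported on $E_j$. Since $E_j$ has primitive lattice direction $(k_j, 1)$, its only lattice points are the two endpoints, so
\[
f_v(0, s) \;=\; c_{\ell_{j-1}, j-1} + c_{\ell_j, j}\, s.
\]
This has unique root $s_j := -c_{\ell_{j-1}, j-1}/c_{\ell_j, j} \in k^\times$ (nonzero and finite because both corner coefficients are nonzero), yielding a single $k$-rational intersection point $P_j = (0, s_j) \in C \cap D_{E_j}$. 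Computing $\partial_s f_v(0, s_j) = c_{\ell_j, j} \neq 0$ then shows that the intersection is transversal, so $C$ is non-singular at $P_j$.

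It remains to rule out any hidden contribution from the toric vertices where consecutive $D_{E_j}$'s meet (and where $D_{E_1}$, $D_{E_\gamma}$ meet the bottom and top toric divisors). In the $2$-dimensional affine chart at any such vertex, associated to a polygon vertex of the form $(\ell_j, j)$, the local equation of $C$ has nonzero constant term $c_{\ell_j, j}$, so $C$ misses every such vertex. Thus the $P_j$ lie on distinct components of $\pi^{-1}(0)$ and are pairwise distinct. Since $\pi|_C$ has degree $\gamma$ and we have produced $\gamma$ transversal intersection points, these exhaust the fiber.

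The main technical care is the toric bookkeeping: choosing the right affine chart around each $D_{E_j}$, tracking which monomials survive the restriction to $r = 0$, and—critically—verifying that $s_j \in k^\times$ so that $P_j$ lies in the torus part of $D_{E_j}$, where $S(\Delta)$ is automatically smooth. The key input is that the four corner coefficients $c_{\ell_{j-1}, j-1}$ and $c_{\ell_j, j}$ are nonzero, which is forced by $(\ell_{j-1}, j-1)$ and $(\ell_j, j)$ being vertices of $\Delta(f)$; once this is noted, the remainder is a direct unwinding of the edge polynomial on $E_j$.
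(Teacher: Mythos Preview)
Your argument is correct and is essentially the same proof as in the paper: both identify the fibre $\pi^{-1}(0)$ with the left-side torus-invariant divisors, restrict the defining equation to each such divisor, and use that each left edge $E_j$ has exactly two lattice points (with nonzero coefficients at those vertices of $\Delta(f)$) to obtain a single $k$-rational point per edge lying away from the torus-fixed points. The only cosmetic differences are that you work in the affine chart $U_{\sigma_j}\cong\AA^1\times\GM$ with explicit coordinates $(r,s)=(t,t^{k_j}y)$, whereas the paper restricts the hyperplane section in the projective coordinates $X_{i,j}$; and you verify non-singularity by the transversality computation $\partial_s f_v(0,s_j)=c_{\ell_j,j}\neq 0$, whereas the paper simply counts that $\gamma$ distinct points in a degree-$\gamma$ fibre forces each to be non-singular.
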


Note that we are not saying anything about $C$ being non-singular above other points of $\PP^1$. See also \cite{kresch_curves_2001} for a similar argument to construct curves of a prescribed genus having many rational points. The shape of $\Delta(f)$ might look like figure \ref{fig:typical.Newton.polygon}. Note that the left side of this polygon consists of $\gamma$ distinct line segments, each containing two lattice points.

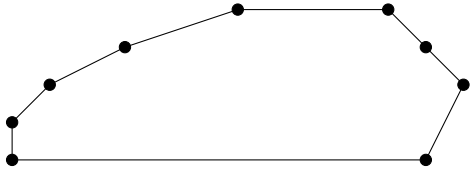
\begin{figure}[ht]
\centering
\begin{tikzpicture}[scale=0.5]
  \draw (0,0)--(0,1)--(1,2)--(3,3)--(6,4)--(10,4)--(11,3)--(12,2)--(11,0)--(0,0);
  
  \draw[fill] (0,0) circle [radius=0.15];
  \draw[fill] (0,1) circle [radius=0.15];
  \draw[fill] (1,2) circle [radius=0.15];
  \draw[fill] (3,3) circle [radius=0.15];
  \draw[fill] (6,4) circle [radius=0.15];
  \draw[fill] (10,4) circle [radius=0.15];
  \draw[fill] (11,3) circle [radius=0.15];
  \draw[fill] (12,2) circle [radius=0.15];
  \draw[fill] (11,0) circle [radius=0.15];
\end{tikzpicture}
\caption{A typical Newton polygon arising from Lemma \ref{lem:ratpoints.above.A1}}
\label{fig:typical.Newton.polygon}
\end{figure}

\begin{proof}
The edges of $\Delta = \Delta(f)$ correspond precisely to the one-dimensional torus invariant divisors of $S(\Delta)$. The points above $0$ of $\pi$ are precisely the intersections of $C$ with the torus invariant divisors corresponding to the edges on the left side of $\Delta$. We consider $S(\Delta)$ as sitting in $\PP^{\#(\Delta \cap \ZZ^2)-1}$ via the map $\phi$ as described above. Denote by $X_{i,j}$ the coordinates on $\PP^{\#(\Delta \cap \ZZ^2)-1}$, where $(i,j)\in \Delta\cap \ZZ^2$.

Fix such an edge $\tau$ on the left side of $\Delta$ and note that it contains exactly two lattice points, say $(\ell_j, j)$ and $(\ell_{j+1}, j+1)$. Let $D$ be the corresponding torus invariant divisor on $S(\Delta)$. Then $D$ is defined by taking $X_{i,j} = 0$ for $(i,j)\notin \tau$. Thus, on $D$, $C$ is defined by the hyperplane section
\[
aX_{\ell_j, j} + bX_{\ell_{j+1}, j+1}=0
\]
for certain $a,b\in k^{\times}$. Thus $C$ contains exactly one $k$-rational point on $D$. Moreover, this point is not invariant under the torus action, since both $a,b\neq 0$. Since there are $\gamma$ such edges on the left hand side of $\Delta$, we conclude that $\pi^{-1}(0)$ consists of $\gamma$ distinct $k$-rational points. The fact that these are all non-singular follows from the fact that $\pi$ has degree $\gamma$.
\end{proof}

\section{Poonen's theorem}

We recall here a theorem by Poonen on squarefree values of multivariate polynomials over $\FF_q[t]$. Fix a prime power $q$. For $a$ in $\FF_q[t]$ define $|a| = \#(\FF_q[t]/(a)) = q^{\deg a}$. For $S$ a subset of $\FF_q[t]^n$ and positive integers $d_1, ..., d_n$ define
\begin{align*}
S(d_1, ..., d_n) &= \{(f_1, ..., f_n) \in S \mid \deg f_i\leq d_i\}, \\
S(d_1, ..., d_n)' &= \{(f_1, ..., f_n) \in S \mid \deg f_i = d_i\}.
\end{align*}
We define the \emph{density} of $S$ to be 
\[
\mu(S) = \lim_{d_1, ..., d_n\to \infty} \frac{\#S(d_1, ..., d_n)}{\#\FF_q[t]^n(d_1, ..., d_n)} = \lim_{d_1, ..., d_n\to \infty} \frac{\#S(d_1, ..., d_n)}{q^{\sum_i (d_i+1)}},
\]
if the limit exists. Here the limit means that for every $\varepsilon>0$ there exists an $M$ such that if $d_1, ..., d_n\geq M$, then 
\[
\left|\frac{\#S(d_1, ..., d_n)}{\#\FF_q[t]^n(d_1, ..., d_n)}-\mu(S)\right|\leq \varepsilon. 
\]
Then Poonen's theorem states the following.

\begin{thm}[{{\cite{poonen_squarefree_2003}}}]\label{thm:poonen}
Let $F\in \FF_q[t][x_1, ..., x_n]$ be a polynomial which is squarefree when considered in $\FF_q(t)[x_1, ..., x_n]$. Let 
\[
S = \{a\in \FF_q[t]^n \mid F(a) \text{ is squarefree}\}.
\]
Then $\mu(S)$ exists and is equal to $\prod_p (1-c_p/|p|^{2n})$ where the product is over all non-zero primes $p$ of $\FF_q[t]$ and $c_p$ is the number of solutions of $F(x)=0$ in $\FF_q[t]/(p^2)$. Moreover, $\mu(S)>0$ if and only if $c_p < |p|^{2n}$ for all primes $p$.
\end{thm}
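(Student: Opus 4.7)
The plan is to adapt Poonen's original sieve argument over $\ZZ$ to the ring $A=\FF_q[t]$. For each non-zero prime $p$ of $A$, set $B_p = \{a\in A^n : p^2 \mid F(a)\}$; then $S^c$ coincides with $\bigcup_p B_p$ up to the negligible locus where $F(a)=0$ (negligible because $F$ is non-constant as a polynomial over $\FF_q(t)$). The density computation proceeds by splitting the primes into three ranges.

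First I would handle the \emph{small primes} $|p|\leq M$ with $M$ fixed. The Chinese remainder theorem yields exactly
\[
\mu\bigl(\{a : p^2 \nmid F(a)\text{ for all }|p|\leq M\}\bigr) = \prod_{|p|\leq M}\Bigl(1 - \tfrac{c_p}{|p|^{2n}}\Bigr),
\]
because the event $p^2 \mid F(a)$ depends only on $a \bmod p^2$ and distinct primes give independent congruence classes. For the \emph{medium primes}, the squarefreeness of $F$ over $\FF_q(t)$ gives $c_p = O_F(|p|^{2n-2})$ for all but finitely many $p$, so $\mu(B_p)=O(1/|p|^2)$; the series $\sum_p 1/|p|^2$ converges, being dominated by $\zeta_{\FF_q[t]}(2)$. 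Consequently the tail $\sum_{|p|>M}\mu(B_p)$ tends to $0$ with $M$, and the infinite product converges.

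The genuine obstacle is the \emph{large primes}: one must pick a cutoff $L=L(d_1,\ldots,d_n)\to\infty$ and bound, uniformly in the $d_i$, the density of $\bigcup_{|p|>L}B_p$ inside the box $\{\deg a_i \leq d_i\}$. Following Poonen I would split this set according to whether $\nabla F(a)$ vanishes modulo $p$. In the \emph{smooth} case some partial derivative $\partial_i F(a)\not\equiv 0\pmod p$, so Hensel's lemma forces the lift of $a_i\bmod p^2$, given $a_i\bmod p$ and the remaining coordinates, to be essentially unique; this gives a contribution of $O(1/|p|)$ per prime per box. In the \emph{singular} case $a\bmod p$ lies in the closed subscheme $V(F,\partial_1 F,\ldots,\partial_n F)$, which by squarefreeness of $F$ has codimension $\geq 2$ for almost all $p$ and hence contributes only $O(1/|p|^2)$. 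Choosing $L$ to grow slowly enough with $\min_i d_i$ makes both tails negligible, with an additional argument handling the very large primes where $p^2 \mid F(a)$ forces $F(a)=0$ (a positive-codimension locus of density zero). Combining the three regimes and letting $M\to\infty$ gives $\mu(S)=\prod_p(1-c_p/|p|^{2n})$; positivity is then immediate, since a convergent product of factors in $(0,1]$ is positive precisely when every factor is strictly positive, i.e.\ $c_p<|p|^{2n}$ for all $p$.
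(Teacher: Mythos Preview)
The paper does not give its own proof of this theorem: it is quoted from Poonen's paper \cite{poonen_squarefree_2003} and used as a black box (the only thing actually proved in Section~3 is the easy inclusion--exclusion corollary about $S(d_1,\dots,d_n)'$). So there is no in-paper argument to compare your proposal against.

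For what it is worth, your sketch does follow the architecture of Poonen's original proof --- the three prime ranges, CRT for small primes, Lang--Weil plus lifting for medium primes, and a separate tail argument for large primes --- and is correct in outline. The large-prime step is where your description is loosest. Your ``$O(1/|p|)$ per prime per box'' in the smooth case should come out as $O(1/|p|^2)$, and more importantly such per-prime density estimates only make sense while the box is large compared to $|p|^2$; once $|p|$ exceeds a threshold tied to the $d_i$ you need the separate observation you mention at the end (that $p^2\mid F(a)$ with $\deg p$ too large forces $F(a)=0$). Poonen's actual device for this range freezes all coordinates but one and reduces to the one-variable case, which over $\FF_q[t]$ is elementary and unconditional. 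If you intend to turn the sketch into a self-contained proof, that reduction is the part to write out carefully.
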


We will apply this theorem to the discriminant polynomial of a curve. However, we really want a statement about $S(d_1, ..., d_n)'$ rather than $S(d_1, ..., d_n)$.

\begin{cor}\label{cor:poonen}
Let $F\in \FF_q[t][x_1, ..., x_n]$ be a polynomial which is squarefree when considered in $\FF_q(t)[x_1, ..., x_n]$. Let 
\[
S = \{a\in \FF_q[t]^n \mid F(a) \text{ is squarefree}\}.
\]
Then
\[
\lim_{d_i\to \infty} \frac{\# S(d_1, ..., d_n)'}{q^{\sum_i d_i} (q-1)^n } = \mu(S).
\]
\end{cor}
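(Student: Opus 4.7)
The plan is to pass from the condition $\deg f_i \leq d_i$ to the condition $\deg f_i = d_i$ by a standard inclusion-exclusion, and then apply Poonen's density result to each of the $2^n$ resulting sum-terms separately.

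Concretely, for $I\subseteq\{1,\ldots,n\}$ let $d_i^I = d_i - 1$ if $i\in I$ and $d_i^I = d_i$ otherwise. The event that $\deg f_i = d_i$ for every $i$ is obtained from the event $\deg f_i \leq d_i$ by removing, for each $i$, the sub-event $\deg f_i \leq d_i - 1$; since these sub-events are intersections cut out coordinatewise, inclusion-exclusion yields
\[
\# S(d_1,\ldots,d_n)' \;=\; \sum_{I\subseteq\{1,\ldots,n\}} (-1)^{|I|}\, \# S(d_1^I,\ldots,d_n^I).
\]
A sanity check is that the same identity with $S$ replaced by $\FF_q[t]^n$ gives the product $\prod_i (q^{d_i+1}-q^{d_i}) = q^{\sum d_i}(q-1)^n$, as expected.

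Next I would rewrite each term as
\[
\frac{\# S(d_1^I,\ldots,d_n^I)}{q^{\sum d_i}(q-1)^n}
= \frac{\# S(d_1^I,\ldots,d_n^I)}{\#\FF_q[t]^n(d_1^I,\ldots,d_n^I)} \cdot \frac{\#\FF_q[t]^n(d_1^I,\ldots,d_n^I)}{q^{\sum d_i}(q-1)^n},
\]
where the second factor equals $q^n q^{-|I|}/(q-1)^n$ by a direct computation. Since $d_i^I\to\infty$ as $d_i\to\infty$, Theorem \ref{thm:poonen} shows that the first factor tends to $\mu(S)$ for every fixed $I$.

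Finally I would sum with signs, pulling $\mu(S)$ outside to obtain
\[
\lim_{d_i\to\infty}\frac{\# S(d_1,\ldots,d_n)'}{q^{\sum d_i}(q-1)^n}
= \mu(S)\cdot \frac{q^n}{(q-1)^n}\sum_{I} (-1)^{|I|} q^{-|I|}
= \mu(S)\cdot \frac{q^n}{(q-1)^n}\Bigl(1-\tfrac{1}{q}\Bigr)^n = \mu(S),
\]
since $(1-1/q)^n = (q-1)^n/q^n$. The main (and really only) thing to be careful about is that the $2^n$ applications of Theorem \ref{thm:poonen} each require all of the relevant arguments to tend to infinity; this is automatic because $I$ is a fixed finite set and $d_i^I = d_i$ or $d_i-1$. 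There is no uniformity issue to worry about because the number of terms in the inclusion-exclusion is bounded (it is $2^n$, independent of the $d_i$), so termwise convergence suffices.
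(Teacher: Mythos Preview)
Your proof is correct and follows exactly the same approach as the paper: inclusion--exclusion over subsets of coordinates to pass from $S(d_1,\dots,d_n)$ to $S(d_1,\dots,d_n)'$, then divide by $q^{\sum d_i}(q-1)^n$ and apply Theorem~\ref{thm:poonen} termwise. You have simply written out the constants and the final summation explicitly, whereas the paper leaves these details to the reader.
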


\begin{proof}
The inclusion-exclusion principle gives that
\begin{align*}
\# S(d_1, ..., d_n)' &= \\
&  \# S(d_1, ..., d_n) - \#S(d_1-1, d_2, ..., d_n) - \#S(d_1, d_2-1, d_3, ..., d_n) - ...  \\
	& \quad + \#S(d_1-1, d_2-1, d_3, ..., d_n) + ... \pm \# S(d_1-1, ..., d_n-1).
\end{align*}
Dividing by $q^{\sum_i d_i}(q-1)^n$ and taking the limit $d_i\to \infty$ gives the desired statement.
\end{proof}

\section{Constructing curves with many points}

In this section we prove Theorem \ref{thm:main.theorem}. Let $\gamma\geq 2$ be the given gonality and let $q$ be an odd prime power. Let $g$ be a sufficiently large integer. We will construct a non-singular curve $C$ over $\FF_q$ of gonality $\gamma$ and of genus $g$ with $\gamma(q+1)$ rational points. The proof proceeds along several steps, which we first outline. 

\begin{enumerate}
\item Construct a family of polynomials in $\FF_q[t,y]$ among which we look for a suitable defining equation.
\item Use Poonen's theorem to a certain discriminant polynomial. This allows us to ensure that the curve is non-singular above $\AA^1(\overline{\FF_q}) \setminus \AA^1(\FF_q)$.
\item Construct a good candidate Newton polygon for the polynomial having a prescribed number of interior lattice points. This will give us control over the genus as well as the points above $\infty\in \PP^1$.
\item Construct the desired curve using the previous steps.
\item Check that the curve satisfies all required conditions: it is geometrically integral, non-singular, and has genus $g$, gonality $\gamma$ and $\gamma(q+1)$ $\FF_q$-rational points.
\end{enumerate}

We will look for a curve defined by an equation
\[
f(t,y,z) = \sum_{i=0}^\gamma f_i(t)y^iz^{\gamma-i},
\]
where $f_i(t)\in \FF_q[t]$. We will also write $f$ for the dehomogenized polynomial $f(t,y,1)$. Let $\Delta$ be the convex hull of $\Delta(f)$ together with $(0,0)$ and $(0,\gamma)$. We let $C$ be the curve defined by $f=0$ inside the surface $S(\Delta)=S$, which might not be geometrically integral. Recall that projection onto $t$ gives a degree $\gamma$ morphism $\pi: C\to \PP^1$, which extends to a morphism $S\to \PP^1$. Inside $S(\Delta)$, there is naturally a copy of $\AA^1\times \PP^1$ obtained as the union of the torus and all torus invariant divisors except those corresponding to edges on the right hand side of $\Delta$. If $C$ is geometrically integral, we denote by $\tilde{C}$ the non-singular model of $C$. The map $\pi$ naturally induces a degree $\gamma$ map $\tilde{C}\to \PP^1$, which we also denote by $\pi$.

\subsection*{Step 1.} We first ensure that $\tilde{C}$ has $\gamma$ distinct non-singular $\FF_q$-rational points above every point of $\AA^1(\FF_q)$. For this we rely on Lemma \ref{lem:ratpoints.above.A1}. So take integers $0 = k_0\leq k_1 < k_2 < ... < k_\gamma$, e.g.\ $k_i = i-1$ for $i>0$ will do, and put $\ell_j = \sum_{i=0}^j k_i$ and $L(\ell) = \sum_{j=1}^{\gamma-1} \ell_j$. Define $\alpha(t) = t^q-t = \prod_{a\in \FF_q}(t-a)\in \FF_q[t]$. We would like to take every $f_i(t)$ of the form
\[
\alpha(t)^{\ell_i} (1+\alpha(t)g_i(t)),
\]
for some polynomials $g_i(t)\in \FF_q[t]$. However, to make sure that $f$ is absolutely irreducible we need something more. Let $\beta(t)\in \FF_q[t]$ be an irreducible polynomial of degree $2$ (any degree $>1$ will do). Then we define
\[
f_i(t) =  \alpha(t)^{\ell_i} \beta(t)^{\delta_i} (1+\alpha(t)\beta(t)^{\delta_i'} g_i(t)),
\]
where $\delta_i = 1$ if $i<\gamma$ and $0$ if $i=\gamma$, and $\delta_i' = 1$ if $i=0, \gamma$ and $0$ if $i\neq 0, \gamma$. Note that then $f$ is a $\Delta$-polynomial, and also an Eisenstein polynomial with respect to $\beta$. We will want to pick the $g_i(t)$ such that the resulting curve $\tilde{C}$ has all of the desired properties. 

\subsection*{Step 2.} To ensure that the curve is non-singular at all points except above $\AA^1(\FF_q)$, we make sure that a certain discriminant polynomial is squarefree. First, we need a lemma.

\begin{lem}\label{lem:generic.discr.irre}
Let $k$ be a field of characteristic not $2$ and $\gamma$ a positive integer and let 
\[
D(x) = \disc_Y \left(\sum_{i=0}^\gamma x_iY^i \right) \in k[x_0, ..., x_\gamma]
\]
be the generic discriminant polynomial. Then $D(x)$ is irreducible in $k[x_0, ..., x_\gamma]$, so in particular it is squarefree.
\end{lem}
\begin{proof}
Consider the generic polynomial $p(x) = x^\gamma - s_1 x^{\gamma-1} + ... + (-1)^\gamma s_\gamma$, whose coefficients are the elementary symmetric polynomials $s_i$ in the roots $r_1, ..., r_\gamma$ of $p(x)$. Recall that the extension $k(r_1, ..., r_\gamma)/k(s_1, ..., s_\gamma)$ is Galois with Galois group $S_\gamma$. 

It is enough to prove that the discriminant polynomial
\[
D(s_1, ..., s_\gamma) = \prod_{i<j} (r_i - r_j)^2
\]
is irreducible in $k[s_1, ..., s_\gamma]$. So assume towards a contradiction that $D(s)$ factors as $D_1(s)D_2(s)$, for some non-constant polynomials $D_1, D_2\in k[s_1, ..., s_\gamma]$. By unique factorization there exist for every pair $i<j$ a $\varepsilon_{i,j}\in \{0,1,2\}$ such that $D_1(s) = \prod_{i<j} (r_i - r_j)^{\varepsilon_{i,j}}$. By Galois theory, one sees that $\varepsilon_{i,j}\leq \varepsilon_{i',j'}$ for every two pairs $i<j, i'<j'$. Hence we conclude that
\[
D_1(s) = c \prod_{i<j} (r_i-r_j),
\]
where $c\in k^\times$. But now, since $k$ has characteristic not $2$, switching $r_1$ and $r_2$ and keeping the other $r_i$ fixed changes the sign of $D_1$, implying that $D_1$ does not have coefficients in $k[s_1, ..., s_\gamma]$. This is a contradiction, and we conclude that $D(s)$ is irreducible.
\end{proof}

As a side remark, note that the above proof also shows that in characteristic 2, the generic discriminant polynomial is the square of an irreducible polynomial.

Define
\[
H(Y) = \sum_{i=0}^\gamma \alpha(t)^{\ell_i} \beta(t)^{\delta_i} (1+\alpha(t)\beta(t)^{\delta_i'} x_i) Y^i ,
\]
and
\[
F_1(x_0, \ldots, x_\gamma) = \disc_Y \left( \sum_{i=0}^\gamma \alpha(t)^{\ell_i} \beta(t)^{\delta_i} (1+\alpha(t)\beta(t)^{\delta_i'} x_i) Y^i\right),
\]
which is a polynomial over $\FF_q[t]$ in the variables $x_0, \ldots, x_\gamma$. Note that this is simply the discriminant of $f$ with respect to $y$, where we consider the $g_i$ as variables. We claim that $\alpha^{2L(\ell)}$ divides $F_1$. Fix a degree one prime $p$ of $\FF_q[t]$, i.e.\ a divisor of $\alpha$, and denote by $v_p$ the valuation corresponding to $p$ on $\FF_q(t)$. We work in some fixed algebraic closure $\overline{\FF_q(t, x_0, ..., x_\gamma)}$ of $\FF_q(t, x_0, ..., x_\gamma)$ and denote by $v_p$ also any extension of $v_p$ to this field, where we require that $v_p(x_i)\geq 0$ for any $i$. Now let $r_1, ..., r_\gamma$ be the roots of the polynomial $H(Y)$ in $\overline{\FF_q(t, x_0, ..., x_\gamma)}$. By looking at the Newton polygon of this polynomial with respect to the valuation $v_p$, we may assume, after reordering, that $v_p(r_i) = -k_i$. Hence we have that 
\[
v_p(F_1) = v_p\left( f_{\gamma}^{2\gamma-2} \prod_{i<j} (r_i-r_j)^2\right) = v_p(f_\gamma^{2\gamma-2}) - \sum_{i<j} k_j = 2L(\ell),
\]
and we conclude that $\alpha^{2L(\ell)}$ divides $F_1$. In fact, our argument shows that this is the exact power of $\alpha$ dividing $F_1$.

We use a similar argument with respect to $\beta$. Let $p$ be a linear factor of $\beta$ in $\overline{\FF_q}[t]$ and denote by $v_p$ an extension of the valuation to $\overline{\FF_q(t, x_0, ..., x_\gamma)}$, where $v_p(x_i)\geq 0$ for any $i$. Let $r$ be any root of $H(Y)$ in this field. Then $v_p(r) = 1/\gamma$ and $F_1 = \pm f_\gamma^{\gamma-2} N ((\partial_Y H)(r))$, where $N$ denotes the norm map from $\overline{\FF_q(t, x_0, ..., x_\gamma)}$ to $\FF_q(t, x_0, ..., x_\gamma)$. Now we have that
\[
\partial_Y H(r) = \gamma f_\gamma r^{\gamma-1} + (\gamma-1)f_{\gamma-1} r^{\gamma-2} + ... + f_1,
\]
and the terms here satisfy $v_p(if_ir^{i-1}) \geq (\gamma+i-1)/\gamma$ for $i= 1, ..., \gamma-1$. Therefore, we certainly have that $\beta^{\gamma-1}$ will divide $F_1$.

Now define
\[
F(x_0, \ldots, x_\gamma) = \frac{F_1(x_0, \ldots, x_\gamma)}{\alpha(t)^{2L(\ell)} \beta(t)^{\gamma-1}}\in \FF_q[t][x_0, \ldots, x_\gamma].
\]
We will apply Poonen's theorem to this polynomial. The highest degree part of $F_1$ is equal to
\[
 \disc_Y \left( \sum_{i=0}^\gamma \alpha(t)^{\ell_i+1} \beta(t)^{\delta_i+\delta_i'}  x_i Y^i\right).
\]
By Lemma \ref{lem:generic.discr.irre} the generic discriminant polynomial $\disc_Y(\sum_{i=0}^\gamma x_iY^i)$ is squarefree when considered in $\FF_q(t)[x_0, \ldots, x_{\gamma}]$, and so we conclude the same about $F_1$. Therefore, also $F$ is squarefree as an element of $\FF_q(t)[x_0, \ldots, x_\gamma]$. Define
\begin{align*}
S &= \{(g_i)_i\in \FF_q[t]^{\gamma+1}\mid F((g_i)_i) \text{ squarefree} \}.
\end{align*}
Then Poonen's theorem \ref{thm:poonen} states that the density $\mu(S)$ exists, and is equal to the product of $(1-c_p/|p|^{2\gamma+2})$ over all non-zero primes $p$ of $\FF_q[t]$, where $c_p$ is the number of zeroes of $F$ over $\FF_q[t]/(p^2)$. Moreover, if $c_p<|p|^{2\gamma+2}$ for all primes $p$ then $\mu(S)>0$.

If $p$ is a prime of degree $1$, fix any values of $g_i$ and let $r_1, ..., r_\gamma$ be the roots of $f=0$ in $\overline{\FF_q(t,y)}$. Let $v_p$ be an extension of the valuation corresponding to $p$ to this field with $v_p(y)\geq 0$. Then by a similar argument as above, we have
\[
v_p(F(g_0, ..., g_\gamma)) = v_p(F_1(g_0, ..., g_\gamma)) - 2L(\ell) = 0,
\]
so that $F$ cannot even have any zeroes modulo $p$. Hence $c_p = 0$. For $p=\beta$, take $g_0, ..., g_\gamma$ such that $v_\beta(f_1) = 1$. Then reasoning as above, one obtains that
\[
v_\beta(F(g_0, ..., g_\gamma)) = v_\beta(F_1(g_0, ..., g_\gamma)) - (\gamma-1) = \begin{cases} 0 & \text{ if } \gamma\neq 0 \text{ in } \FF_q \\
1 & \text{ if } \gamma = 0 \text{ in } \FF_q. \end{cases}
\]
Hence, we see that $F_1(g_0, ..., g_\gamma)$ is non-zero modulo $\beta^2$ and so $c_\beta < |\beta|^{2\gamma+2}$. If $p$ is another prime, not of degree $1$ or equal to $\beta$, then $\alpha(t)$ and $\beta(t)$ are both invertible modulo $p$. Then $F(x_0, \ldots, x_\gamma) \bmod p$ is obtained from the generic discriminant polynomial $\disc_Y(\sum_{i=0}^\gamma x_iY^i)\bmod p$ by linear substitution in the $x_i$. Since there exist squarefree polynomials of every degree over every finite field, we see that $c_p < |p|^{2\gamma+2}$. We conclude that $\mu(S)>0$, as desired.

\subsection*{Step 3.} We will want to take $(g_i)_i\in S(d_0, ..., d_{\gamma})'$ for suitable integers $d_i$ (depending on the genus $g$). Note that these $d_i$ also determine the Newton polygon of $f$. So we now look for a good candidate Newton polygon, which will give us control over the genus of the curve and moreover ensures that the curve is non-singular above $\infty$. 

Fix some residue class $g \equiv n \bmod \gamma-1$ of genera with $n=0, ..., \gamma-2$ and put $m = n + qL(\ell)$. Take $k_2'>\gamma-3$ such that
\[
k_2' \equiv \sum_{j=1}^{\gamma-3} j^2 - m\mod \gamma-1.
\]
Define $k_i' = \gamma-i$ for $i=3, ..., \gamma$ and put $k_1' = k_2'+1$. Note that
\[
k_1'>k_2'>...>k_\gamma', \quad \sum_j k_j' \geq 0, \quad \text{and } \sum_{j=1}^{\gamma-1} (\gamma-j)k_j' \equiv m\bmod \gamma-1.
\]
Define $\ell_j' = \sum_{i=1}^j k_i'$, $\ell_0'=0$ and for $r\geq 0$ a positive integer consider the lattice polygon
\[
\Delta_r = \conv\{(0,0), (0,\gamma), (r, \gamma), (r+\ell_1', \gamma-1), (r+\ell_2', \gamma-2), ..., (r+ \ell_\gamma', 0)\}.
\]
Our choice of integers $k_j'$ guarantees that the number of interior lattice points of $\Delta$ is congruent to $m$ modulo $\gamma-1$. Thus, for every sufficiently large integer $s$ there is some $r$ such that this polygon $\Delta_r$ has exactly $n+qL(\ell) + s(\gamma-1)$ interior lattice points. Note also that by our choice of $k_i'$, the right hand side of $\Delta$ consists of $\gamma$ line segments, each containing exactly two lattice points.

\subsection*{Step 4.} We now construct the desired curve. By Corollary \ref{cor:poonen} there exists some $M$ such that if $d_0, \ldots, d_\gamma \geq M$ then 
\[
S(d_0, \ldots, d_\gamma)' \neq \emptyset.
\]
Let $g$ be the desired genus for our curve and assume that $g\equiv n\bmod \gamma-1$. If $g$ is sufficiently large, then there exists some positive integer $r$ such that the polygon $\Delta = \Delta_r$ constructed in the previous step has exactly $g + qL(\ell)$ interior lattice points. Recall that $\deg \beta =2$ and define for every $i\in \{0, \ldots, \gamma\}$
\[
d_i = r + \ell_{\gamma-i}' - q(\ell_i+1) - 2(\delta_i+\delta_i').
\]
This will be the desired degree of $g_i(t)$. If $g$ is sufficiently large, then so is $r$ and hence $d_i\geq M$ for all $i$. Therefore, there exists some
\[
(g_0, \ldots, g_\gamma) \in S(d_0, \ldots, d_\gamma)'.
\]
Let $f(t,y)$ be the polynomial as constructed above from the $(g_i)_i$, namely we put
\[
f_i(t) =  \alpha(t)^{\ell_i} \beta(t)^{\delta_i} (1+\alpha(t)\beta(t)^{\delta_i'} g_i(t)),
\]
and define $f = \sum_{i=0}^\gamma f_i(t)y^i$. Let $C = V(f)$ inside $S = S(\Delta)$. We claim that this curve is as desired. Namely, $f$ is absolutely irreducible and the non-singular model of $C$ is of genus $g$ with $\gamma(q+1)$ $\FF_q$-rational points and gonality $\gamma$.

\subsection*{Step 5.} We first prove that $f$ is absolutely irreducible and that $C$ is smooth above $\AA^1(\overline{\FF_q})\setminus \AA^1(\FF_q)$. We need a general lemma.

\begin{lem}\label{lem:nonsingular.above.A1}
Let $k$ be a field and let $f(t, y,z) = \sum_{i=0}^\gamma f_i(t)y^iz^{\gamma-i}$ be a polynomial over $k$. Let $C = V(f)$ be the curve determined by $f=0$ inside $\AA^1\times \PP^1$, where the coordinate on $\AA^1$ is $t$ and the coordinates on $\PP^1$ are $y,z$. If $C$ has a singularity above a non-zero prime $p$ of $k[t]$ then $p^2$ divides $\disc_y f(t,y,1)$.
\end{lem}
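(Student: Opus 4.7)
The plan is to show $(t - t_0)^2 \mid D := \disc_y f(t,y,1)$ in $\overline{k}[t]$ for each geometric singular point $(t_0, y_0, z_0)$ of $C$ with $t_0$ a root of $p$; since Galois conjugates of such a singular point are again singular, taking the product over the Galois orbit of $t_0$ upgrades this to $p^2 \mid D$ in $k[t]$. Invariance of the discriminant under reversal of the polynomial, namely $\disc_y f(t,y,1) = \disc_w f(t,1,w)$ (a direct check via the product formula), allows me to swap $y$ and $z$ and assume $z_0 = 1$. Writing $g(t,y) := f(t,y,1)$, the singular-point condition reads $g(t_0, y_0) = \partial_t g(t_0, y_0) = \partial_y g(t_0, y_0) = 0$. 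Since $(t - t_0)^2 \mid D$ is equivalent to $D(t_0) = D'(t_0) = 0$ by first-order Taylor expansion (valid in any characteristic), it suffices to verify both vanishings.

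The first, $D(t_0) = 0$, is immediate: $y_0$ is a multiple root of $\sum_i f_i(t_0) y^i$ by the first two singularity conditions. For $D'(t_0) = 0$, I would introduce the universal discriminant $\Delta_\gamma(a_0, \dots, a_\gamma)$ of $\sum_i a_i y^i$ and the map $\Phi \colon t \mapsto (f_0(t), \dots, f_\gamma(t))$, so that $D = \Delta_\gamma \circ \Phi$, and apply the chain rule: $D'(t_0) = \sum_i (\partial_{a_i}\Delta_\gamma)|_{\Phi(t_0)} \cdot f_i'(t_0)$.

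The key tangent-space calculation is the following. At a smooth point $a^* \in \{\Delta_\gamma = 0\}$ corresponding to a polynomial with $y_0$ as a root of multiplicity exactly $2$ (other roots simple), the tangent space equals the hyperplane $H_{y_0} := \{(v_0,\dots,v_\gamma) : \sum_i v_i y_0^i = 0\}$, equivalently $(\partial_{a_i}\Delta_\gamma)|_{a^*} = c \cdot y_0^i$ for some nonzero scalar $c$. I would prove this by locally parametrizing $\{\Delta_\gamma = 0\}$ near $a^*$ via $(r, \tilde G) \mapsto (y - r)^2 \tilde G$ with $\deg \tilde G \leq \gamma - 2$: differentiating at $r = y_0$, every tangent vector $-2(y - y_0)\tilde G_0 \,\delta r + (y - y_0)^2 \,\delta \tilde G$ is divisible by $(y - y_0)$ and thus lies in $H_{y_0}$; a dimension count ($\gamma$ parameters in $\AA^{\gamma+1}$) forces equality. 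Hence, when $\Phi(t_0)$ is such a smooth point, the third singular condition rewrites as $\partial_t g(t_0, y_0) = \sum_i f_i'(t_0) y_0^i = 0$, placing $\Phi'(t_0) \in H_{y_0}$, whence $D'(t_0) = 0$.

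The remaining cases, where $\sum_i f_i(t_0) y^i$ has a root of multiplicity $\geq 3$, two distinct multiple roots, or suffers a degenerate degree drop, place $\Phi(t_0)$ at a singular point of the hypersurface $\{\Delta_\gamma = 0\}$: there all partials $\partial_{a_i}\Delta_\gamma$ vanish automatically and so does $D'(t_0)$ (in fact $\Delta_\gamma$ vanishes to order $\geq 2$ there, so $D$ does too). The main obstacle is the case analysis confirming that each degenerate configuration really corresponds to a non-smooth point of $\{\Delta_\gamma = 0\}$, particularly at the boundary $f_\gamma(t_0) = 0$. A uniform alternative avoiding this case analysis is direct Puiseux analysis: the condition $\partial_t g(t_0, y_0) = 0$ forces the two Puiseux roots $r_1, r_2$ of $g$ colliding over $t_0$ to satisfy $v_{t - t_0}(r_1 - r_2) \geq 1$ (from the Newton polygon of the local expansion $g = A\tau^2 + B\tau\eta + C\eta^2 + \cdots$), and the product formula $D = f_\gamma^{2\gamma - 2}\prod_{i<j}(r_i - r_j)^2$ then gives $v_{t - t_0}(D) \geq 2$ after valuation bookkeeping on any roots that go to infinity when $f_\gamma(t_0) = 0$.
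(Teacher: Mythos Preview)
Your argument is correct, but it is considerably more elaborate than what the paper does. The paper makes one further simplification that you omit: after reducing to $z_0=1$, it also translates $y\mapsto y+y_0$ (the discriminant is invariant under this as well), so that the singular point sits at $y_0=0$. The three singularity equations then become the very concrete coefficient conditions
\[
f_0(a)=0,\qquad f_0'(a)=0,\qquad f_1(a)=0,
\]
i.e.\ $(t-a)^2\mid f_0$ and $(t-a)\mid f_1$. From here the paper simply inspects the Sylvester matrix of $f$ and $\partial_y f$: the last column has only $f_0$ and $f_1$ as nonzero entries, so one factor of $(t-a)$ comes out, and modulo $(t-a)$ the resulting last column and the second-to-last column are proportional, giving the second factor. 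No chain rule, no tangent-space description of the discriminant hypersurface, no case analysis for degenerate root configurations.

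Your route via the universal discriminant and the identification of $T_{a^*}\{\Delta_\gamma=0\}$ with $H_{y_0}$ is a nice geometric explanation of \emph{why} the three singularity equations force $(t-a)^2\mid D$, and the Puiseux alternative you sketch is also valid. The cost is the extra machinery and the case split at singular points of $\{\Delta_\gamma=0\}$ (including the boundary $f_\gamma(t_0)=0$), all of which the paper sidesteps by the single extra translation $y_0\to 0$.
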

\begin{proof}
By invariance of the discriminant under translations and taking reciprocals we can assume that $C$ has a singularity at $t=a, y=0, z=1$, where $p(a)=0$. Here we have to move to some algebraic closure of $k$ but this is not a problem. Now, being a singularity means that
\[
f(a, 0) = f_0(a) = 0, (\partial_t f)(a, 0) = f_0'(a)=0, (\partial_y f)(a, 0) = f_1(a) = 0.
\]
By computing the discriminant from the Sylvester matrix of $f, \partial_y f$, one sees that $(t-a)^2$ will divide $\disc_y f(t,y)$. Since the original $f$ is defined over $k$, this situation occurs for any root $a$ of $p$ and so $p^2$ will divide $\disc_y f(t,y)$.
\end{proof}

The condition of having squarefree discriminant at a prime $p$ is stronger than being non-singular. Geometrically, having squarefree discriminant at $p$ means that the ramification of the map $C\to \AA^1$ above $p$ is of the form $(2, 1, ..., 1)$ or $(1,...,1)$. 

\begin{lem}\label{lem:abs.irre}
The polynomial $f$ is absolutely irreducible and $C$ has no singularities above $\PP^1(\overline{\FF_q})\setminus \AA^1(\FF_q)$.
\end{lem}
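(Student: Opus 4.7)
The proof naturally splits into absolute irreducibility of $f$, smoothness of $C$ above $\AA^1(\overline{\FF_q}) \setminus \AA^1(\FF_q)$, and smoothness above $\infty \in \PP^1$. The two key ingredients are already in place: $f$ was built to be Eisenstein with respect to $\beta$, and
\[
F((g_i)_i) \;=\; \disc_y(f)/(\alpha^{2L(\ell)}\beta^{\gamma-1})
\]
is squarefree in $\FF_q[t]$, hence also in $\overline{\FF_q}[t]$ since $\FF_q$ is perfect.

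For absolute irreducibility, I factor $\beta = (t-b_1)(t-b_2)$ over $\overline{\FF_q}$ (with $b_1 \neq b_2$, since $\beta$ is separable) and observe that the same valuation computations as in Step~2 show that $f$ remains Eisenstein in $\overline{\FF_q}[t][y]$ at each prime $t-b_j$. This yields irreducibility of $f$ in $\overline{\FF_q}(t)[y]$, so any factorization in $\overline{\FF_q}[t,y]$ must take the form $f = h(t)\tilde f(t,y)$ with $\deg_y \tilde f = \gamma$. The scaling identity $\disc_y(h\tilde f) = h^{2\gamma-2}\disc_y(\tilde f)$ together with $\gamma \geq 2$ forces $p^2 \mid \disc_y(f)$ for every prime $p \mid h$. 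I then rule out each possibility for $p$: if $p = t-a$ divides $\alpha$ then $f_0(a) = \beta(a) \neq 0$ contradicts $p \mid f_0$; if $p = t-b_j$ divides $\beta$ then $f_\gamma(b_j) = \alpha(b_j)^{\ell_\gamma} \neq 0$ contradicts $p \mid f_\gamma$; and if $p$ is coprime to $\alpha\beta$ then $p^2 \mid F((g_i)_i)$ contradicts squarefreeness.

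For smoothness above $\AA^1(\overline{\FF_q}) \setminus \AA^1(\FF_q)$, I split into two subcases. For a prime $p$ of $\overline{\FF_q}[t]$ coprime to $\alpha\beta$, squarefreeness of $F((g_i)_i)$ gives $p^2 \nmid \disc_y(f)$, so Lemma~\ref{lem:nonsingular.above.A1} yields smoothness of $C$ above $p$. For $p = t-b_j$ dividing $\beta$, the Eisenstein property forces $f(b_j, y, z) = f_\gamma(b_j)y^\gamma$, so the fiber above $t = b_j$ in $\AA^1 \times \PP^1 \subseteq S$ is the single closed point $(b_j, 0, 1)$; a direct computation gives $(\partial_t f)(b_j, 0) = f_0'(b_j) = \beta'(b_j) \neq 0$ by separability of $\beta$, so this point is smooth on $C$.

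Finally, smoothness above $\infty$ follows from the symmetric analog of Lemma~\ref{lem:ratpoints.above.A1} applied to the right-hand side of $\Delta = \Delta_r$: by the construction in Step~3, this side consists of $\gamma$ edges each containing exactly two lattice points, so the same hyperplane-section argument produces $\gamma$ distinct non-singular closed points in $\pi^{-1}(\infty)$. The most delicate step is probably the primitivity portion of the absolute irreducibility argument, since Eisenstein alone only guarantees irreducibility over the fraction field; it is precisely the $h^{2\gamma-2}$ scaling of the discriminant, combined with Poonen's squarefreeness, that rules out constant factors.
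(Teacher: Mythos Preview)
Your proof is correct and follows essentially the same route as the paper: both use the Eisenstein shape of $f$ at a linear factor of $\beta$ to force any nontrivial factor to lie in $\overline{\FF_q}[t]$, then eliminate such a factor via the $h^{2\gamma-2}$ scaling of the discriminant together with squarefreeness of $F((g_i)_i)$; smoothness away from $\AA^1(\FF_q)$ is handled identically via Lemma~\ref{lem:nonsingular.above.A1}, the Eisenstein property at $\beta$, and the right-hand edges of $\Delta$ for the fiber at $\infty$. Your treatment is in fact slightly more explicit than the paper's in two places (you name Eisenstein's criterion rather than reproving it via valuations, and you compute $(\partial_t f)(b_j,0)=\beta'(b_j)\neq 0$ directly rather than just citing ``Eisenstein''), but there is no substantive difference in strategy.
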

\begin{proof}
Suppose that $f(y,t) = a(y,t)b(y,t)$ for certain $a, b\in \overline{\FF_q}[t,y]$. Write $a = \sum_{i=0}^\lambda a_i(t) y^i, b = \sum_{i=0}^{\gamma-\lambda} b_i(t)y^i$ where $\lambda$ is the $y$-degree of $a$, and $a_i(t), b_i(t)\in \overline{\FF_q}[t]$. Let $p$ be a linear factor of $\beta$ in $\overline{\FF_q}[t]$ and denote by $v_p$ the induced valuation on $\overline{\FF_q}[t]$. Then we have that 
\[
1 = v_p(f_0) = v_p(a_0) + v_p(b_0), \quad 0 = v_p(f_\gamma) = v_p(a_\lambda) + v_p(b_{\gamma-\lambda}).
\]
So without loss of generality, $v_p(a_\lambda) = v_p(b_{\gamma-\lambda}) = v_p(b_0) = 0$ and $v_p(a_0) = 1$. Now take $\lambda'\geq 0$ such that $v_p(a_{\lambda'})=0$ but $v_p(a_i) \geq 1$ for $i=0, ..., \lambda'-1$. Then
\[
v_p(f_{\lambda'}) = v_p(b_{0}a_{\lambda'} + b_{1}a_{\lambda'-1} + \ldots) = 0
\]
and by construction of $f$, $\lambda' = \gamma$ and hence also $\lambda = \gamma$. This implies that $b = b_0(t)\in \overline{\FF_q}[t]$. It is clear that $b_0$ is coprime to both $\alpha(t)$ and $\beta(t)$, because $\alpha(t)$ and $\beta(t)$ do not divide $f(y,t)$. But then $b_0(t)^{2\gamma-2}$ would divide $\disc_y f(t,y)$, which contradicts the fact that $(g_i)_i\in S$. So we conclude that $f$ is absolutely irreducible.

That $C$ has no singularities above primes $p$ not dividing $\alpha(t)$ or $\beta(t)$ follows directly from Lemma \ref{lem:nonsingular.above.A1} and the fact that $(g_i)_i\in S$. That the same holds above $\beta$ follows from the fact that $f$ is an Eisenstein polynomial with respect to $\beta$. Finally, by an argument similar to Lemma \ref{lem:ratpoints.above.A1} but using the right hand side of $\Delta$, we see that the fibre of $C$ above $\infty$ consists of $\gamma$ non-singular $\FF_q$-rational points.
\end{proof}

Denote by $\tilde{C}$ the non-singular model of $C$. As for the genus of $\tilde{C}$, and the points above $\PP^1(\FF_q)$ we have the following.

\begin{lem}\label{lem:control.genus}
Every fibre of $\pi: \tilde{C}\to \PP^1$ above $\PP^1(\FF_q)$ consists of $\gamma$ distinct non-singular $\FF_q$-rational points. Moreover, the genus of $\tilde{C}$ is equal to
\[
g(\tilde{C}) = \# (\Delta^{(1)}\cap \ZZ^2) - qL(\ell) = g.
\]
\end{lem}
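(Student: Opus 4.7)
The lemma has two parts: the fibre description over $\PP^1(\FF_q)$ and the genus formula. I address them in order.

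For the fibres, the case $\infty$ is already settled in Lemma \ref{lem:abs.irre} (via the right-side staircase of $\Delta$). For a finite $a \in \FF_q$, the crucial observation is the identity $\alpha(t + a) = \alpha(t)$, valid because $a^q = a$. Consequently each $f_i(t + a) = \alpha(t)^{\ell_i} \cdot (\text{unit at } t = 0)$, so the Newton polygon of $f(t + a, y)$ has the staircase $(\ell_0, 0), (\ell_1, 1), \ldots, (\ell_\gamma, \gamma)$ required by Lemma \ref{lem:ratpoints.above.A1} on its lower-left boundary. Applying that lemma to the translated polynomial $f(t + a, y)$ then produces $\gamma$ distinct non-singular $\FF_q$-rational points above $t = 0$ of the translated curve, equivalently $\gamma$ distinct non-singular $\FF_q$-rational points of $\tilde C$ above $t = a$.

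For the genus, the plan is to compare arithmetic and geometric genera via
\[
g(\tilde C) = g_a(C) - \sum_{P \in C^{\mathrm{sing}}} \delta_P.
\]
Since $C$ is a reduced irreducible hyperplane section of the smooth toric surface $S(\Delta)$, adjunction (equivalently Pick's theorem) gives $g_a(C) = \#(\Delta^{(1)} \cap \ZZ^2) = g + qL(\ell)$ by the construction of $\Delta$ in Step~3, so it suffices to prove that $\sum_P \delta_P = qL(\ell)$. Combining Lemma \ref{lem:abs.irre} (smoothness over $\PP^1(\overline{\FF_q}) \setminus \AA^1(\FF_q)$), Lemma \ref{lem:nonsingular.above.A1} with the squarefreeness of $F(g_0, \ldots, g_\gamma)$, and the fact that the affine fibre of $C$ over each $a \in \FF_q$ in $\AA^1 \times \AA^1$ is either empty or a single smooth point, the singular locus of $C$ in $S(\Delta)$ is contained in $\{P_a = (a, \infty) : a \in \FF_q\} \subset \AA^1 \times \PP^1 \subset S(\Delta)$.

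At each $P_a$, work in local coordinates $u = t - a$, $v = 1/y$. The dehomogenized equation
\[
\tilde f(u, v) = \sum_{i=0}^\gamma f_i(u + a)\, v^{\gamma - i}
\]
has Newton polygon at the origin with lower-left staircase vertices $(\ell_i, \gamma - i)$ for $i = 0, \ldots, \gamma$, since $f_i(u + a) = \alpha(u)^{\ell_i} \cdot (\text{unit})$. Each compact edge has primitive direction $(k_{i+1}, -1)$, and its binomial face polynomial is Newton non-degenerate (a short computation using $\ell_j \neq \ell_{j+1}$ and $(\gamma - j) \neq (\gamma - j - 1)$). Newton--Puiseux then yields one smooth analytic branch per edge, of the form $v = c_j u^{k_{j+1}} + O(u^{k_{j+1}+1})$; a vertical edge (arising only if $k_1 = 0$) corresponds instead to the single smooth affine point in the fibre rather than a branch through $P_a$. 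The pairwise intersection multiplicity of two branches $i < j$ at $P_a$ equals $\min(k_{i+1}, k_{j+1}) = k_{i+1}$, and a direct count gives
\[
\delta_{P_a} = \sum_{m=1}^{\gamma - 1}(\gamma - m)\, k_m = \sum_{j=1}^{\gamma - 1} \ell_j = L(\ell).
\]
Summing over the $q$ values of $a \in \FF_q$ yields $q L(\ell)$, which completes the genus computation.

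The main technical obstacle is the local Newton--Puiseux analysis at each $P_a$: certifying Newton non-degeneracy of the binomial face polynomials (so the branches are really smooth with the stated leading terms) and confirming that $C$ has no further singularities on the other torus-invariant boundary divisors of $S(\Delta)$. Both follow from the careful construction of $f$ in the earlier steps; the $\delta$-invariant calculation itself is then a routine intersection-multiplicity exercise.
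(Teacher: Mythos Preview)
Your argument is correct. The fibre description is handled just as in the paper (you merely make the translation invariance $\alpha(t+a)=\alpha(t)$ explicit where the paper says ``by our construction of $f$'').

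For the genus, your route genuinely differs. The paper resolves the singularity over each $a\in\AA^1(\FF_q)$ by a toric birational morphism $S(\Delta(f))\to S(\Delta)$: the strict transform is smooth above $0$ by Lemma~\ref{lem:ratpoints.above.A1}, and its arithmetic genus is read off directly as the interior lattice-point count $\#(\Delta(f)^{(1)}\cap\ZZ^2)=\#(\Delta^{(1)}\cap\ZZ^2)-L(\ell)$, so each $a$ contributes $L(\ell)$ to the drop. You instead compute the $\delta$-invariant at $P_a$ by hand via Newton--Puiseux: the height-one compact faces of the local Newton diagram produce smooth branches $v\sim c_j u^{k_j}$, and summing their pairwise intersection multiplicities gives $\sum_{m=1}^{\gamma-1}(\gamma-m)k_m=L(\ell)$. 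The paper's approach is cleaner conceptually---the genus defect becomes a visible difference of lattice-point counts, and smoothness of the resolved curve comes for free from Lemma~\ref{lem:ratpoints.above.A1}---while yours is more elementary and self-contained, needing no further toric machinery beyond what was already introduced, at the price of the local non-degeneracy and branch-intersection checks you flag at the end.
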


\begin{proof}
The first statement for fibres above $\AA^1(\FF_q)$ follows directly from Lemma \ref{lem:ratpoints.above.A1} and our construction of $f$. Note that the right hand side $\Delta$ consists of $\gamma$ line segments, each containing two lattice points. Above $\infty$, a similar argument as Lemma \ref{lem:ratpoints.above.A1} using the right hand side of $\Delta$ then gives that also $\pi^{-1}(\infty)$ consists of $\gamma$ non-singular $\FF_q$-rational points.

By the previous lemma, $C$ has no singularities in $S$, except above points of $\AA^1(\FF_q)$. The curve $C$ has arithmetic genus $\# (\Delta^{(1)}\cap \ZZ^2)$. Recall that the torus-invariant points of $S$ correspond to the lattice points of $\Delta$, so let $p\in S$ be the torus-invariant point corresponding to $(0,\gamma)\in \Delta$. By looking at the fans of $S$ and $S' = S(\Delta(f))$, one sees that there is a toric morphism $\psi: S'\to S$ which is an isomorphism away from $p$. The strict transform of $C$ under this morphism is the curve $C'$ defined by $f$, but in the surface $S'$, and $\psi$ restricts to a morphism $C'\to C$ which is an isomorphism away from $p$. Since $\pi(p) = 0$, the curve $C$ is isomorphic to $C'$ above $\PP^1\setminus \{0\}$. By Lemma \ref{lem:ratpoints.above.A1}, the curve $C'$ is non-singular above $0$, and by comparing Newton polygons, the arithmetic genus of $C'$ is equal to
\[
\# (\Delta(f)^{(1)}\cap \ZZ^2) = g_a(C) - L(\ell).
\]
The same reasoning holds above every other point of $\AA^1(\FF_q)$, by our construction of the polynomial $f$. After resolving the singularities above every $a\in \AA^1(\FF_q)$, we end up with the curve $\tilde{C}$, since these were the only singularities of $C$. Hence we conclude that this curve has genus
\[
\# (\Delta(f)^{(1)}\cap \ZZ^2) - q L(\ell)=g. \qedhere
\]
\end{proof}

This lemma implies immediately that $\tilde{C}$ has $\gamma(q+1)$ $\FF_q$-rational points. Finally, the fact that $\tilde{C}$ has gonality $\gamma$ follows directly from the gonality bound \ref{eq:gonality.bound} since $\tilde{C}$ has $\gamma(q+1)$ $\FF_q$-rational points and we have an $\FF_q$-rational map $\tilde{C}\to \PP^1$ of degree $\gamma$.

\bibliographystyle{amsplain}
\bibliography{MyLibrary}

\providecommand{\bysame}{\leavevmode\hbox to3em{\hrulefill}\thinspace}
\providecommand{\MR}{\relax\ifhmode\unskip\space\fi MR }
\providecommand{\MRhref}[2]{%
  \href{http://www.ams.org/mathscinet-getitem?mr=#1}{#2}
}
\providecommand{\href}[2]{#2}
\begin{thebibliography}{10}

\bibitem{beelen}
P.~Beelen, \emph{A generalization of baker's theorem}, Finite {F}ields and
  {T}heir {A}pplications \textbf{15} (2009), 558--568.

\bibitem{castryck_linear_2016}
W.~Castryck and F.~Cools, \emph{Linear pencils encoded in the {Newton}
  polygon}, International mathematics research notices \textbf{10} (2017),
  2998--3049.

\bibitem{elkies_curves_2004}
Noam E., E.~Howe, A.~Kresch, B.~Poonen, J.~Wetherell, and M.~Zieve,
  \emph{Curves of every genus with many points, {II}: {Asymptotically} good
  families}, Duke Mathematical Journal \textbf{122} (2004), no.~2, 399--422.

\bibitem{faber_binary_2020}
X.~Faber and J.~Grantham, \emph{Binary {Curves} of small fixed genus and
  gonality with many rational points}, arXiv:2005.07054 (2020).

\bibitem{faber_ternary_2020}
\bysame, \emph{Ternary and {Quaternary} {Curves} of {Small} {Fixed} {Genus} and
  {Gonality} with {Many} {Rational} {Points}}, arXiv:2010.07992 (2020), to
  appear in Experimental Mathematics.

\bibitem{faber_vermeulen}
X.~Faber and F.~Vermeulen, \emph{On abelian covers of the projective line with
  fixed gonality and many rational points}, In preparation.

\bibitem{khovanskii}
A.~G. Hovanski\u{\i}, \emph{Newton polyhedra, and the genus of complete
  intersections}, Funktsional. Anal. i Prilozhen. \textbf{12} (1978), no.~1,
  51--61. \MR{487230}

\bibitem{Ihara}
Y.~Ihara, \emph{Some remarks on the number of rational points of algebratic
  curves over finite fields}, Journal of the faculty of science, the University
  of Tokyo (1981), no.~28, 721--724.

\bibitem{kresch_curves_2001}
A.~Kresch, J.~Wetherell, and M.~Zieve, \emph{Curves of every genus with many
  points, {I}: {Abelian} and toric families}, Journal of Algebra \textbf{250}
  (2002), no.~1, 353 -- 370.

\bibitem{poonen_squarefree_2003}
B.~Poonen, \emph{Squarefree values of multivariable polynomials}, Duke
  Mathematical Journal \textbf{118} (2003), no.~2, 353--373.

\bibitem{van_der_geer_curves_2001}
G.~van~der Geer, \emph{Curves over finite fields and codes}, Progress in
  mathematics \textbf{202} (2001).

\bibitem{vladut_number_1983}
S.~Vladut and V.G. Drinfeld, \emph{Number of points of an algebraic curve},
  Functional analysis and its applications \textbf{17} (1983), no.~1, 53--54.

\end{thebibliography}

\end{document}